\numberwithin{equation}{section}
\theoremstyle{plain}
\newtheorem{theorem}[equation]{Theorem}
\newtheorem{prop}[equation]{Proposition}
\newtheorem{lemma}[equation]{Lemma}
\theoremstyle{definition}
\newtheorem{defn}[equation]{Definition}
\theoremstyle{remark}
\newcommand\R{{\mathbb R}}
\newcommand\re{\mathrm {Re}\,}
\newcommand\im{\mathrm {Im}\,}
\newcommand\genus{{\textsl{g}}}
\def\a#1{\left\llbracket{#1}\right\rrbracket}
\newcommand{\bB}{{\mathbf B}}
\newcommand{\mass}{{\mathbf{M}}}
\newcommand{\eps}{\varepsilon}
\DeclareMathOperator{\graph}{graph}
\DeclareMathOperator{\Ker}{Ker}
\DeclareMathOperator{\Id}{Id}
\subjclass[2010]{53A10(49Q05)}
\keywords{Minimal surfaces, Infinite genus, Calibrations}
\begin{document}

\title[Nonclassical minimal surfaces]{Nonclassical minimizing surfaces with smooth boundary}
\author[De Lellis]{Camillo De Lellis}
\address{School of Mathematics, Institute for Advanced Study, 1 Einstein Dr., Princeton NJ 05840, USA\\
and Universit\"at Z\"urich}
\email{camillo.delellis@math.ias.edu}

\author[De Philippis]{Guido De Philippis}
\address{SISSA Via Bonomea 265, I34136 Trieste, Italy}
\email{gdephili@sissa.it}

\author[Hirsch]{Jonas Hirsch}
\address{Mathematisches Institut, Universit\"at Leipzig, Augustusplatz 10, D-04109 Leipzig, Germany}
\email{hirsch@math.uni-leipzig.de}

\begin{abstract}
We construct a Riemannian metric \(g\) on \(\mathbb{R}^4\) (arbitrarily close to the euclidean one)  and a smooth simple closed curve \(\Gamma\subset \mathbb R^4\) such that the unique area minimizing surface spanned by \(\Gamma\) has infinite topology. Furthermore the metric is  almost K\"ahler and the area minimizing surface is calibrated.
\end{abstract}

\maketitle

\section{Introduction} Consider a smooth closed simple curve $\Gamma$ in $\mathbb R^n$. The existence of oriented surfaces which bound $\Gamma$ and minimize the area can be approached in two different ways. Following the classical work of Douglas and Rado we can fix an abstract connected smooth surface $\Sigma_\genus$ of genus $\genus$ whose boundary $\partial \Sigma_\genus$ consists of a single connected component and look at smooth maps $\Phi: \Sigma_\genus \to \mathbb R^n$ with the property that the restriction of $\Phi$ to $\partial \Sigma_\genus$ is an homeomorphism onto $\Gamma$. We then consider the infimum $A_\genus (\Gamma)$ over all such $\Phi$ and all Riemannian metrics $h$ on $\Sigma$ of  
\[
\int_{\Sigma_\genus} |\nabla \Phi|^2\,{\rm dvol}_h\, .
\]
If $A_\genus (\Gamma) < A_{\genus -1} (\Gamma)$, then there is a minimizer $(\Phi, h)$ and the image of $h$ is an immersed surface of genus $\genus$, with possible branch points, see \cite{Douglas,Shiffman39,Courant40} and also \cite{Jost85,TomiTromba88}. 
The second, more intrinsic, approach was pioneered later by De Giorgi, in the codimension $1$ case~\cite{De-Giorgi55}, and by Federer and Fleming in higher codimension~\cite{FF}. They look at a suitable measure-theoretic generalization of smooth oriented surfaces, called integral currents $T$, whose generalized boundary is given by $\a{\Gamma}$ and minimize a suitable generalization of the area, called mass. In this framework a minimizer always exist and competitors do not have any topological restriction. 

\medskip

A basic question is whether the Federer-Fleming solution $T$ coincides with the Douglas-Rado solutions for some genus $\genus$. This is true if the curve $\Gamma$ is sufficiently regular ($C^{k,\alpha}$ for $k+\alpha>2$, because combining De Giorgi's interior regularity theorem  \cite{DG} with Hardt and Simon's boundary regularity theorem \cite{HS}, we know that every minimizer is an embedded $C^2$ surface up to the boundary $\Gamma$, in particular it has finite genus $\genus_0$. As corollaries, any conformal  parametrization $\Phi$ of $T$ gives a minimizer in the sense of Douglas and Rado, while $A_\genus (\Gamma) = A_{\genus_0} (\Gamma)$ for every $\genus > \genus_0$. If we instead merely assume that \(\Gamma\) has finite length, Fleming showed in~\cite{Fleming56} that it is possible to have $A_{\genus+1} (\Gamma)<A_{\genus} (\Gamma)$ for \(\genus\) arbitrarily large, implying in particular that every integral current minimizer has infinite topology, see also~\cite{AlmgrenThurston77} for related phenomena.

In higher codimension, namely for $n\geq 4$, it is known that the minimizer $T$ is in general not regular, neither in the interior nor at the boundary. Concerning the interior regularity, it has been shown by Chang in \cite{Chang} that $T$ is smooth in $\R^n\setminus \Gamma$ up to a discrete set of singular branch points and self-intersections (we in fact refer to \cite{DSS1, DSS2, DSS3, DSS4} for a complete proof, as Chang needs a suitable modification of the techniques of Almgren's monumental monograph \cite{Alm} to start his argument, and the former has been given in full details in \cite{DSS3}). As a corollary we know therefore that for any point $p\not\in \Gamma$ there is a neighborhood $U$ in which $T$ is the union of finitely many topological disks. Nonetheless it is still an open problem whether ``globally'' such solutions $T$ have finite topology. So far this can be only concluded If \(\Gamma\) is of class $C^{k,\alpha}$ for $k+\alpha >2$ and lies in the boundary of a uniformly convex open set, because Allard's boundary regularity theorem \cite{AllB} rules out boundary singularities.  

\medskip

In the general case, however, very little is known about the boundary regularity of area minimizing integral currents. The first result has been established by the authors and A. Massaccesi in the recent work \cite{DDHM}, which shows that, if $\Gamma$ is of class $C^{k,\alpha}$ for $k+\alpha >3$, then the set of regular boundary points is open and dense in \(\Gamma\). On the other hand the same paper gives a smooth simple closed curve $\Gamma$ in $\R^4$ bounding a (unique) minimizer $T$ which has infinitely many singularities. Such $T$ is, however, still an immersed disk, which has a countable number of self-intersections accumulating towards a boundary branch point: it is, in particular, a Douglas-Rado solution with genus $\genus = 0$.

In his work \cite{White97} White conjectures that the Federer-Fleming solution has finite genus if $\Gamma$ is real analytic. If White's conjecture were true, then the main theorem in \cite{White97} would imply that, for real analytic $\Gamma$, the set of boundary and interior singular points is finite and it would also exclude the presence of branch points at the boundary: the (finitely many) singular boundary points would all arise as self intersections.

\medskip

As already mentioned, the example in \cite{DDHM} shows that the latter conclusion would certainly be false for smooth $\Gamma$ in $\R^4$. In this note we show that, if we perturb the Euclidean metric in an appropriate way, the same curve bounds a unique area minimizing integral current with infinite topology. In particular, if we look at White's conjecture in Riemannian manifolds, real analyticity is a necessary assumption to exclude infinite topology of the Federer-Fleming solution. Our precise theorem is the following, where we denote by $\delta$ the standard Euclidean metric.

\begin{theorem}\label{t:main} For every $\varepsilon >0$ and every $N\in \mathbb N$ there is a smooth metric $g$ on $\R^4$, a smooth oriented curve $\Gamma$ in the unit ball $\bB_1$ passing through the origin and a smooth oriented surface $\Sigma$ in $\bB_1 \setminus \{0\}$ such that:
\begin{itemize}
\item[(a)] $g = \delta$ on $\R^4\setminus \bB_1$ and $\|g-\delta\|_{C^N} < \varepsilon$;
\item[(b)] $\a{\Sigma}$ is the unique area minimizing integral current in the Riemannian manifold $(\R^4, g)$ which bounds $\a{\Gamma}$;
\item[(c)] $\Sigma$ has infinite topology. 
\end{itemize}
\end{theorem}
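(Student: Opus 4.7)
We proceed by the method of calibrations in an almost K\"ahler setting. We construct, in $\R^4 = \mathbb{C}^2$, a smooth $\omega_0$-compatible almost complex structure $J$ equal to the standard $J_0$ outside $\bB_1$, together with a smooth embedded oriented surface $\Sigma\subset \bB_1\setminus\{0\}$ which is $J$-holomorphic. Setting $g(u,v):=\omega_0(u,Jv)$ produces an almost K\"ahler metric whose K\"ahler form is the (closed) $\omega_0$; the pointwise Wirtinger inequality gives $|\omega_0(P)|\le 1$ for every $g$-unit 2-plane $P$, with equality iff $P$ is $J$-complex. Hence $\omega_0$ is a calibration for $g$ and, since $\Sigma$ is $J$-holomorphic, $\a{\Sigma}$ is automatically area minimizing. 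Keeping $\omega=\omega_0$ globally and perturbing only $J$ makes condition (a) a matter of controlling $\|J-J_0\|_{C^N}$.

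\textbf{Local handle replacement.} Take as $\Sigma_0$ the $J_0$-holomorphic immersed disk from \cite{DDHM}, with boundary $\Gamma$ and transverse self-intersections $p_k\to 0$. In a small ball $B_k=\bB_{r_k}(p_k)$, choose coordinates in which $\Sigma_0\cap B_k=\{z_1=0\}\cup\{z_2=0\}$, and replace this node with the smooth embedded annulus
\[
A_k := \{z_1 z_2 = \eps_k\,\chi_k(|z|)\}\cap B_k,
\]
where $\chi_k$ is a smooth cutoff equal to $1$ near $0$ and to $0$ near $r_k$, and $\eps_k>0$ is small. Near $\partial B_k$, $A_k$ agrees with $\{z_1z_2=0\}=\Sigma_0\cap B_k$, so the replacement matches smoothly; topologically each replacement is a 1-handle attachment, adding one to the genus. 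We then build an $\omega_0$-compatible almost complex structure $J_k$ on $B_k$, equal to $J_0$ near $\partial B_k$, for which $A_k$ is pseudo-holomorphic: at each $q\in A_k$ define $J_k$ on $T_qA_k$ as rotation by $\pi/2$ (in the metric induced by $\omega_0$) and extend to the $\omega_0$-orthogonal complement by compatibility, then interpolate smoothly to $J_0$ off $A_k$. Since near $\partial B_k$ the annulus $A_k$ is already $J_0$-holomorphic (as $\chi_k=0$ there), no modification is needed on a boundary shell, and choosing $\eps_k$ small with respect to $r_k$ makes $\|J_k-J_0\|_{C^N(B_k)}$ as small as we wish.

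\textbf{Global assembly and uniqueness.} Choose radii $r_k\to 0$ rapidly enough that the $B_k$ are pairwise disjoint and disjoint from $\Gamma$, and that $\sum_k\|J_k-J_0\|_{C^N}<\eps$. Setting $J:=J_k$ on $B_k$ and $J:=J_0$ elsewhere gives a smooth $\omega_0$-compatible almost complex structure on $\R^4$, equal to $J_0$ outside $\bB_1$, and an almost K\"ahler metric $g$ satisfying (a). Setting $\Sigma:=(\Sigma_0\setminus\bigcup_k B_k)\cup\bigcup_k A_k$ gives a smooth oriented embedded surface in $\bB_1\setminus\{0\}$ which is globally $J$-holomorphic and has infinite genus, proving (c) and showing $\a{\Sigma}$ is area minimizing. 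For uniqueness, any other area minimizer $T'$ bounding $\a{\Gamma}$ must itself be calibrated by $\omega_0$, so its approximate tangent planes are $J$-complex a.e.; combining this with the interior regularity of $J$-holomorphic integral currents and the boundary regularity from \cite{DDHM}, together with a unique-continuation / positivity-of-intersection argument for pseudo-holomorphic curves, forces $T'=\a{\Sigma}$.

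\textbf{Main obstacle.} The heart of the proof is the local handle construction with the correct quantitative control: we must match $\eps_k$ to $r_k$ so that infinitely many pseudo-holomorphic handles can be implanted at the accumulating double points with total $C^N$ perturbation below $\eps$, while ensuring each $A_k$ is smoothly glued to $\Sigma_0$ and that the almost complex structure matches $J_0$ in the transition region. A secondary difficulty is rigorously justifying uniqueness, which in higher codimension is not automatic from the calibration property and requires genuine use of the pseudo-holomorphic structure of competitors.
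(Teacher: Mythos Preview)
Your overall strategy is the same as the paper's: start from the $J_0$-holomorphic immersed disk of \cite{DDHM}, replace each transverse double point by a handle, and produce an almost K\"ahler structure calibrating the resulting surface; uniqueness then comes from the calibration together with boundary regularity and unique continuation. The paper packages this via Theorem~\ref{t:main2}, and the heart of the matter is the Glueing Proposition (Proposition~\ref{l:glueing}).

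The implementation differs in one interesting respect. You keep $\omega=\omega_0$ fixed and perturb $J$ (hence $g(u,v)=\omega_0(u,Jv)$), so closedness of the calibrating form is automatic; the paper instead perturbs $\omega$ to a nearby closed $2$-form calibrating the glued surface (using a tailored Poincar\'e lemma, Lemma~\ref{l:Poincare}) and then extracts $g$ and $J$ from $\omega$ and the Euclidean reference metric via polar decomposition ($g_p=(-A_p^2)^{-1/2}$, $J_p=g_p^{-1}A_p$). Your route is conceptually a bit shorter, but two points are glossed over. First, ``rotation by $\pi/2$ in the metric induced by $\omega_0$'' is not well defined: on a $2$-plane $P$ with $\omega_0|_P\neq 0$ there is a one-parameter family of $\omega_0$-compatible complex structures, and you must fix an auxiliary inner product (e.g.\ $\delta$) to single one out; you also need to check that the interpolated surface is genuinely $\omega_0$-symplectic in the transition region and that the resulting $J$, extended off $A_k$ while remaining $\omega_0$-compatible, satisfies the claimed $C^N$ bound. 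Second, the local biholomorphism putting $\Sigma_0\cap B_k$ into the form $\{z_1=0\}\cup\{z_2=0\}$ does \emph{not} preserve $\omega_0$, so in those coordinates your ``$\omega_0$'' is a non-standard symplectic form; this is harmless if tracked, but your formula $A_k=\{z_1z_2=\eps_k\chi_k(|z|)\}$ tacitly uses the standard form. The paper avoids both issues by working graphically over the tangent planes and interpolating graph functions in a fixed annulus.

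For uniqueness, the paper argues exactly along your lines: any minimizer is calibrated, hence has $J$-complex tangents on its (connected) interior regular part; by \cite{DDHM} there is a regular boundary point where the competitor and $\Sigma$ are smooth with the same boundary and the same tangent plane, and unique continuation for minimal surfaces forces equality. Positivity of intersections is not needed.
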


In our example $\Sigma$ has (only) one singularity at the origin. The latter is a boundary singular point and $\Sigma$ displays a sequence of interior necks accumulating to it. A simple modification of our proof gives the existence of an area-minimizing current which bounds a smooth curve in a smooth Riemannian manifold and has an infinite number of interior branch points accumulating to the boundary. For the precise statement see Theorem \ref{t:branching} below. For the proofs of both Theorem \ref{t:main} and Theorem \ref{t:branching} it is essential that we are allowed to perturb the Euclidean metric. In particular the question whether such examples can exist is in some Euclidean space remains open.

\medskip

As pointed out, the question of whether the Federer-Fleming solution coincides with a Douglas-Rado solution is closely related to the regularity theory for area minimizers. We therefore close this introduction with a brief (and certainly not exhaustive) review of what is known for the Douglas-Rado solution. Interior branch points can be excluded in codimension \(1\), i.e. for surfaces in \(\R^3\), see~\cite{Osserman70, Alt72,Alt73,Gulliver73} and the discussion in~\cite[Section 6.4]{DierkesHildebrandtTromba10}. In higher codimension both interior branch points and self intersections are possible (primary examples are holomorphic curves in $\mathbb C^k = \mathbb R^{2k}$).
Concerning boundary branch points, it is well known that they can exist in higher codimension if the boundary curve is just $C^k$. The example of \cite{DDHM} mentioned above shows that they can exist even if it is $C^\infty$, while the aforementioned paper of White \cite{White97} excludes their existence when $\Gamma$ is real analytic. In fact the same conclusion was drawn much earlier in codimension \(1\) by a classical paper of Gulliver and Lesley,~\cite{GulliverLesley73}.

In codimension $1$ the existence of boundary branch points for the Douglas Rado solution is still an open question and it is probably the most important one in the field, we refer again to 
the discussion in~\cite[Section 6.4]{DierkesHildebrandtTromba10} for a detailed account of the known results. In~\cite{Gulliver91} Gulliver provides an interesting example of a  \(C^\infty\) curve in \(\mathbb R^3\) which bounds a minimal disk with one boundary branch point, however it is not known wether this surface is a Douglas-Rado solution. We note in passing that Gulliver's proof gives as well a Douglas-Rado disk-type solution (in fact a Federer Fleming solution) in \(\R^6\) spanning a \(C^\infty\) curve and with a boundary branch point.

\subsection*{Acknowledgements} The authors would like to thank Claudio Arezzo and Emmy Murphy for several interesting discussions. The work of G.D.P is supported by the INDAM grant ``Geometric Variational Problems''.

\section{Preliminaries}\label{sec:prel}

The Riemannian manifold $(\R^4, g)$ of Theorem \ref{t:main} has in fact a very special geometric structure, since it is an almost K\"ahler manifold.

\begin{defn}\label{d:Kaehler}
An almost complex structure on a smooth $4$-dimensional manifold $M$ is given by a smooth $(1,1)$ tensor $J$ with the property that $J^2 = - \Id$. The structure is almost K\"ahler if there is a smooth Riemannian metric $g$ with the properties that:
\begin{itemize}
\item[(i)] $J$ is isometric, namely $g (JV, JW) = g (V,W)$ for every vector fields $V$ and $W$;
\item[(ii)] The $2$-form defined by $\omega (V, W) := -g (V, JW)$ is closed. 
\end{itemize}
$\omega$ will be called the almost K\"ahler form associated to the almost K\"ahler structure.
\end{defn}

Theorem \ref{t:main} will then be a corollary of the following

\begin{theorem}\label{t:main2}
For every $\varepsilon >0$ and every $N\in \mathbb N$ there is a smooth metric $g$ on $\R^4$, a smooth oriented curve $\Gamma$ in the unit ball $\bB_1$ passing through the origin and a smooth oriented surface $\Sigma$ in $\bB_1 \setminus \{0\}$ such that:
\begin{itemize}
\item[(a)] $g = \delta$ on $\R^4\setminus \bB_1$ and $\|g-\delta\|_{C^N} < \varepsilon$;
\item[(b1)] there is an almost complex structure $J$ for which Definition \ref{d:Kaehler}(i)\&(ii) hold;
\item[(b2)] $\a{\Sigma}$ bounds $\a{\Gamma}$ and the pull-back of the corresponding $\omega$ on $\Sigma$ is the  volume form with respect to the metric \(g\); 
\item[(c)] $\Sigma$ has infinite topology.
\end{itemize}
\end{theorem}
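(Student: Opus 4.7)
The approach is via the calibration method in almost K\"ahler geometry. For any almost K\"ahler structure $(g,J,\omega)$, Wirtinger's inequality gives $\omega(P)\le |P|_g$ for every oriented $2$-plane $P$, with equality if and only if $P$ is $J$-complex (in the induced orientation). Hence $\omega$ is a comass-$1$ calibration, and any smooth oriented $J$-holomorphic surface $\Sigma$ with $\partial\a{\Sigma}=\a{\Gamma}$ is automatically mass-minimizing in $(\R^4,g)$. The strict form of Wirtinger forces every mass-minimizing integral current bounding $\a{\Gamma}$ to be $J$-holomorphic almost everywhere, and uniqueness then follows from the standard rigidity of pseudo-holomorphic currents with prescribed boundary.

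My plan is to fix $\omega=\omega_0$, the standard symplectic form on $\R^4\cong\mathbb{C}^2$, and perturb only $J$ within the space of $\omega_0$-compatible almost complex structures. Then $d\omega=0$ automatically, and $g:=\omega_0(\cdot, J\cdot)$ defines a Riemannian metric whose $C^N$ distance to $\delta$ is controlled by $\|J-J_0\|_{C^N}$. The theorem thus reduces to building a smooth surface $\Sigma$ of infinite genus lying in $\bB_1\setminus\{0\}$ with smooth boundary $\Gamma$, and an $\omega_0$-compatible $J$ close to $J_0$ in $C^N$ such that $T\Sigma$ is $J$-invariant everywhere.

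For $\Sigma$ I would start from the immersed $J_0$-holomorphic disk $D$ of~\cite{DDHM} whose transverse self-intersections $\{p_k\}$ accumulate at one boundary point (which I place at the origin), and keep $\Gamma:=\partial D$. At each crossing $p_k$ I perform a holomorphic resolution, replacing the nodal local model $\{w_1w_2=0\}$ by the smooth algebraic model $\{w_1w_2=\eta_k\}$ for a rapidly decreasing sequence $\eta_k\to 0$. Each resolution adds one handle, so $\Sigma$ has infinite genus; each patch $\{w_1w_2=\eta_k\}$ is itself $J_0$-holomorphic, so the only failure of $J_0$-holomorphy is confined to the smooth interpolation between the neck and the ambient sheets of $D$. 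In these transition zones I perturb $J_0$ so that $T\Sigma$ becomes $J$-invariant; this is possible because $\omega_0|_{T\Sigma}>0$ (the surface is symplectic since it is $C^1$-close to a $J_0$-complex surface), and off a small collar of $\Sigma$ I set $J=J_0$.

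The main obstacle is the $C^N$ control of $J-J_0$ as the scales of the resolved crossings accumulate at the origin: a correction supported in a ball of radius $r_k$ with $C^0$ amplitude $a_k$ contributes $\sim a_k\,r_k^{-N}$ to $\|J-J_0\|_{C^N}$. Because the amplitude $a_k$ is of the order of $\eta_k/r_k^2$ (the deviation from $J_0$-holomorphy of the nearly flat transition), this can be summed to $<\eps$ provided $\eta_k$ decays much faster than any fixed power of $r_k$, a balance one can realize by choosing the resolution parameters inductively together with the scales. Verifying this delicate scale balance, confirming that the resulting $\Sigma$ is genuinely $J$-holomorphic with the required global smoothness, and preserving uniqueness of the pseudo-holomorphic minimizer, is the technical heart of the argument.
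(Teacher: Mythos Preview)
Your overall plan---start from the immersed holomorphic disk of \cite{DDHM}, resolve each node $\{w_1w_2=0\}$ by the smooth model $\{w_1w_2=\eta_k\}$, interpolate in disjoint annular collars, and choose $\eta_k$ so small relative to $r_k$ that the geometric perturbation is summable in $C^N$---is exactly the construction carried out in Sections~\ref{s:complex}--\ref{s:desingularization} and Proposition~\ref{l:glueing}.

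The genuine difference is how you obtain the almost K\"ahler data. You keep $\omega=\omega_0$ fixed and perturb only $J$ within the $\omega_0$-compatible structures, using the (correct, standard) fact that a surface on which $\omega_0$ restricts positively can be made $J$-holomorphic for some compatible $J$, with $g:=\omega_0(\cdot,J\cdot)$. The paper goes the other way: it perturbs the $2$-form. In the collar it manufactures a new closed $\omega$ via a tailored Poincar\'e Lemma (Lemma~\ref{l:Poincare}), arranged so that $\omega$ pulls back on $\Sigma$ to $\omega_0$ and so that $T_p\Sigma$ and $T_p\Sigma^\perp$ are $\omega$-orthogonal; it then recovers $J$ and $g$ algebraically by polar decomposition of the skew endomorphism $A$ with $\omega(v,w)=-\delta(v,Aw)$, setting $g=(-A^2)^{-1/2}$ and $J=g^{-1}A$. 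Your route is lighter (closedness is automatic and no primitive is needed), while the paper's route yields explicit formulas from which the calibration property and the $C^N$ bounds are read off directly. Both rest on the same geometric fact---the glued graph is symplectic in the transition annulus---so either works.

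One caveat: uniqueness of the minimizer is not part of Theorem~\ref{t:main2} (it enters only in deducing Theorem~\ref{t:main}). The paper does not appeal to a general ``rigidity of pseudo-holomorphic currents''; it uses the boundary regularity theorem of \cite{DDHM} to find a regular boundary point, and then unique continuation for minimal surfaces to force coincidence with $\Sigma$. Your phrase ``standard rigidity'' glosses over this nontrivial input.
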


Property (b2) is usually referred to as $\omega$ calibrating the surface $\Sigma$. 
It is a classical elementary, yet powerful, remark of Federer that the conditions (b1)-(b2) imply, by an inequality of Wirtinger, the minimality of the current $\a{\Sigma}$, cf. \cite{Fed}. Wirtinger's theorem shows that 
\[
\omega (V,W) \leq 1
\]
whenever 
\[
|V\wedge W|_g :=\sqrt{g (V,V) g (W,W) - g(V,W)^2} \leq 1
\] 
and that the equality holds if and only if $W = JV$. In the language of geometric measure theory Wirtinger's inequality implies that the comass (relative to the metric $g$) of the form $\omega$ is $1$. Moreover, we infer from the second part of Wirtinger's Theorem (the characterization of the equality case) that $\omega$ is pulled back to the standard volume form on $\Sigma$ if and only if there is a positively oriented tangent frame of the tangent bundle to $\Sigma$ of the form $\{V, JV\}$. Consider now any current (not necessarily integral!) $T$ which bounds $\a{\Gamma}$. Since $\omega$ is closed  and $\R^4$ has trivial topology, \(\omega\) has a primitive $\alpha$. We then must have
\[
T (\omega ) = T (d\alpha) = \int_\Gamma \alpha = \int_\Sigma d\alpha = \int_\Sigma \omega = \mass (\a{\Sigma})\, .
\] 
On the other hand Wirtinger's inequality implies that the comass of $\omega$ in the metric $g$ is $1$ and thus the mass of $T$ is necessarily larger than $T (\omega)$. 

This shows that $\a{\Sigma}$ is area minimizing. In order to conclude that it is the unique minimizer, we must appeal to the boundary regularity theory developed in \cite{DDHM}. First of all observe that, by \cite[Theorem 2.1]{DDHM} the interior regular set $\Lambda := {\rm Reg}_i (T)$ of the current $T$ is connected, it is an orientable submanifold of $\mathbb R^4$ and (up to a change of orientation) $T = \a{\Lambda}$. Moreover, by \cite[Theorem 1.6]{DDHM} there is at least one point $p\in \Gamma\setminus \{0\}$ and a neighborhood $U$ of $p$ such that $\Lambda \cap U$ is a smooth oriented surface with smooth oriented boundary $\Gamma \cap U$. By the argument above we must have $T (\omega) = \mass (T)$ and this implies, by Wirtinger's Theorem, that the tangent planes to $\Lambda$ are invariant under the action of $J$. The same holds for the tangent planes to $\Sigma$. In particular, the tangents to $\Sigma$ and $\Lambda$ must coincide at every point $q\in \Gamma \cap U$ and they must have the same orientation. Since both are smooth minimal surfaces in $U$, the unique continuation for elliptic systems implies that they coincide in a neighborhood of $q$. Again, thanks to  the unique continuation principle  and the connectedness of $\Lambda$ we conclude that $\Lambda$ is in fact a subset of $\Sigma$. However, since they have the same area, this implies that $\a{\Sigma} = T$. 

\section{Proof of Theorem \ref{t:main2}: Part I}\label{s:complex}

In this section we slightly modify the construction given in \cite[Section 2.3]{DDHM} to achieve a smooth curve $\Gamma$ in $\R^4$ and an integral current $T$ in $\R^4$ such that
\begin{itemize}
\item[(i)] $T$ bounds $\a{\Gamma}$ and is area minimizing in $(\R^4, \delta)$ (i.e. with respect to the Euclidean metric), in fact $T$ is induced by an holomorphic subvariety in $\R^4\setminus \Gamma$;
\item[(ii)] $T$ is regular at $\Gamma\setminus \{0\}$;
\item[(iii)] $0$ is an accumulation point for the interior singular set of $T$, denoted by ${\rm Sing}_i (T)$;
\item[(iv)] At each $p\in {\rm Sing}_i (T)$ there is a neighborhood $U$ such that $T$ in $U$ consists of two holomorphic curves intersecting transversally at $p$.
\end{itemize}

First of all consider the complex plane with an infinite slit 
\[
\mathbb K := \{z\in \mathbb C\} \setminus \{z\in \mathbb R : z\leq 0\}\, .
\]
We consider the usual inverse $\arctan: \mathbb R \to (-\frac{\pi}{2}, \frac{\pi}{2})$ on the real axis of the trigonometric function $\tan$ and we fix a determination of the complex logarithm on $\mathbb K$ which coincides with
\[
{\rm Log}\, z = \log |z| + i \arctan \frac{{\rm Im}\, z}{{\rm Re}\, z}\, .
\]
on the open half plane $\mathbb H := \{z\in \mathbb C : \re z > 0\}$.
Correspondingly we define the functions $z^{-\alpha} = \exp (-\alpha {\rm Log}\, z)$ for
$\alpha\in (0,1)$ and
\[
f_k (z) = \exp (- z^{-\alpha}) \sin \left({\rm Log}\, z + \frac{3-2k}{6} \pi i \right)\,  \qquad \mbox{for $k=0,1,2,3$.}
\]
Observe that:
\begin{itemize}
\item[(i)] If we extend each $f_k$ to the origin as $0$, then $f_k$ is a smooth function over any wedge 
\[
\mathbb K_a := \{z: - \re z \leq a |\im z|\}
\] 
with $a$ positive.   
\item[(ii)] Since $\exp (- z^{-\alpha})$ does not vanish on $\overline{\mathbb H}\setminus \{0\}$, the zero set $Z_k$ of $f_k$ in $\overline{\mathbb H}\setminus \{0\}$ is given
by 
\[
Z_k= \left\{z\in \overline{\mathbb H}: {\rm Log}\, z + \frac{3-2k}{6} \pi i \in \pi \mathbb Z\right\}\, ,
\]
namely by 
\begin{equation}\label{e:formula_Z_k}
Z_k = \left\{\exp \left(n \pi + i\frac{2k-3}{6}\pi \right): n \in \mathbb Z\right\}\, . 
\end{equation}
\end{itemize}
Consider next the function
\[
g(z) = \prod_{k=0}^3 f_k (z)\, .
\]
We then conclude that $g$ is holomorphic on $\mathbb K$, it is $C^\infty$ on $\mathbb K_a$ for every $a>0$ and its zero set in $\overline{\mathbb H}$, which we denote by $Z$, is given by
\[
Z = \{0\} \cup \bigcup_{k=0}^3 Z_k\, .
\]
Define now the map $G: \mathbb K\to \mathbb C^2$ by $G(z) = (z^3, g(z))$. We consider a smooth simple
curve $\gamma\subset \mathbb K_1$ which in a neighborhood of the origin is tangent to the imaginary axis 
and we let $D\subset \mathbb K_1$ be the open disk bounded by $\gamma$. Following the arguments of \cite[Section 2.3]{DDHM} it is not difficult to see that $\gamma$ can be chosen so that:
\begin{itemize}
\item[(A)] $\{g=0\} \cap \gamma = \{0\}$;
\item[(B)] $\{g=0\}\cap D\subset \overline{\mathbb H}$, hence $\{g=0\}\cap D \subset Z$ and, for each $k\in \{0, \ldots , 3\}$, it contains all sufficiently small elements of $Z_k$, namely there is a positive constant $c_0$ such that $\{z\in Z_k : |z|\leq c_0\} \subset D$.
\end{itemize}
The current $T := G_\sharp \a{D}$ is integer rectifiable, it has multiplicity one (in particular it coincides with $\a{G (D)}$) and 
\[
\partial T = G_\sharp \partial \a{D} = G_\sharp \a{\gamma}\, .
\] 
Observe that $G (D)$ is an holomorphic curve of $\mathbb C^2$, which carries a natural orientation. If $\a{G(D)}$ denotes the corresponding integer rectifiable current, we then can follow the argument in \cite[Section 2.3]{DDHM} to show that $T = \a{G(D)}$ and Federer's classical argument implies that $T$ is area minimizing for the standard Euclidean metric.

The arguments given in \cite[Section 2.3]{DDHM} show that $G_\sharp \a{\gamma} = \a{G (\gamma)}$ and $G (\gamma) \subset \mathbb C^2 = \mathbb R^4$ is a smooth embedded curve. The same arguments also show that $G (D)$ is a smooth immersed surface, that it is embedded outside the discrete set $G (Z)$ and that at each point $q\in G (Z\cap D)$ it consists of two holomorphic graphs intersecting transversally.

\section{Proof of Theorem \ref{t:main2}: Part II}\label{s:desingularization}

In order to conclude the proof of Theorem \ref{t:main} the idea is to modify the example of the previous section and substitute the self-intersection of each singular point $q\in G(Z\cap D)$ with a neck. In order for the new surface to be area minimizing we will then perturb the Euclidean metric and the standard complex structure to a nearby metric and a nearby almost K\"ahler structure. More precisely, order the points $\{p_k\}_{k\in \mathbb N}$ of the discrete set $G (Z\cap D)$. Fix sufficiently small balls $\bB_{100r_k} (p_k)$ so that they are all disjoint and do not intersect the boundary curve $\Gamma$. Recall that $G (D) \cap \bB_{100 r_k} (p_k)$ consist of two holomorphic disks intersecting transversally at $p_k$. In particular, we can assume that the two tangents to these disks are given by $\pi_1$ and $\pi_2$, where $\pi_1$ and $\pi_2$ are two distinct affine complex planes, namely 
\begin{align}
\pi_1 &= p_k + \{(z,w): a_1 z+ b_1 w =0\}\\
\pi_2 &=p_k + \{(z,w) : a_2 z+ b_2 w =0\}
\end{align}
for two different points $[a_1, b_1], [a_2, b_2] \in \mathbb C \mathbb P^1$.
The idea is to choose a sufficiently small $\eta_k>0$ and substitute the surface $G(D)$ inside $\bB_{r_k} (p_k)$ with the holomorphic subvariety 
\[
\Lambda_k := \{p_k + (z,w): (a_1z + b_1 w) (a_2z + b_2 w)  = \eta_k\}\, ,
\] 
while glueing it back to the original surface $G(D)$ in the annulus $\bB_{100 r_k} (p_k)\setminus \overline{\bB}_{r_k} (p_k)$.

If $\eta_k$ and $r_k$ are sufficiently small, we can assume that $\Lambda_k \cap \bB_{100r_k} (p_k) \setminus \overline{\bB}_{r_k} (p_k)$ and $G (D) \cap \bB_{100r_k} (p_k) \setminus \overline{\bB}_{r_k} (p_k)$ consist each of two annuli, respectively $\Lambda^1_k$, $\Lambda^2_k$ and $\Sigma^1_k$, $\Sigma^2_k$, where $\Lambda^i_k$ is close to $\Sigma^i_k$. Moreover, again by assuming that $\eta_k$ and $r_k$ are sufficiently small, each $\Lambda^i_k$ and $\Sigma^i_k$ are graphs of holomorphic functions  over the plane $p_k +\pi_i$. We now wish to glue the surfaces $\Sigma^1_k$ and $\Lambda^1_k$ and $\Sigma^2_k$ and $\Lambda^2_k$ and modify the Euclidean metric and the standard K\"ahler structure in the annulus $\bB_{100r_k} (p_k) \setminus \overline{\bB}_{r_k} (p_k)$ to a nearby Riemannian metric with a corresponding almost K\"ahler structure, so that the glued surface is calibrated by the associated almost K\"ahler form. Both the new metric and the corresponding 
almost K\"ahler form will coincide with the Euclidean metric and the standard K\"ahler form outside of a neighborhood of the glued surface. By assuming $r_k$ and $\eta_k$ very small, we can reduce to perform such glueing in neighborhoods of the planar annuli $(p_k +\pi_1) \cap  \bB_{100r_k} (p_k) \setminus \overline{\bB}_{r_k} (p_k)$ and $(p_k +\pi_2) \cap  \bB_{100r_k} (p_k) \setminus \overline{\bB}_{r_k} (p_k)$, which are disjoint. In particular we can assume that we glue the two pairs of surfaces and we modify the metric and the K\"ahler form in two separate regions. A schematic picture summarizing our discussion is given in the picture below.

\begin{figure}[htbp]
\begin{center}\label{fig:1}
\includegraphics{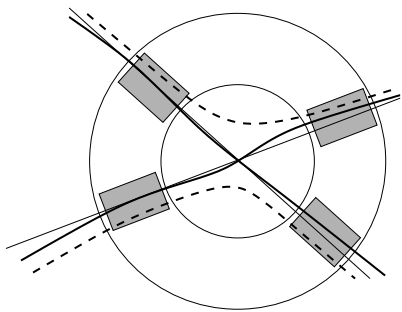}
\end{center}
\caption{A schematic picture of the procedure outlined above. The figure contains cross sections of the corresponding objects with the real affine plane $p_k + \mathbb R \times \mathbb R \subset \mathbb C \times \mathbb C$. In particular, $G (D)$ is pictured by the thick continuous curves, which in $p_k$ are tangent to the union of two crossing complex lines $p_k + \pi_1$ and $p_k + \pi_2$. The dashed lines represent the hyperbola $\Lambda_k$. The surface 
$\Sigma$ will coincide with the dashed lines in the inner ball, with the thick lines outside the outer ball and with a smooth interpolation between the two surfaces in the annular region. The interpolation will take place in the shadowed region, where both $\Lambda_k$ and $G(D)$ are graphical over the corresponding portion of $p_k + \pi_i$.}
\end{figure}

The corresponding metric $g_k$ will coincide with the euclidean one outside of the annulus and will have the property that, if we set $\bar{k} := \max \{k, N\}$, then 
\begin{equation}\label{e:geometric}
\|g_k - \delta\|_{C^{\bar k}} < \varepsilon 2^{-\bar k-1}\, ,
\end{equation}
where $\varepsilon$ is the constant of Theorem \ref{t:main2}. 
The latter estimate will be achieved by choosing $\eta_k$ appropriately small, so that the graphs $\Lambda^i_k$ almost coincide with the graphs $\Sigma^i_k$.

The surface $\Sigma$ and the metric $g$ of Theorem \ref{t:main2} will then be defined as follows:
\begin{itemize}
\item Outside of $\bigcup_k \bB_{100 r_k} (p_k)$ $\Sigma$ coincides with $G (D)$ and the metric $g$ is the Euclidean metric.
\item Inside each $\overline{\bB_{r_k} (p_k)}$ $\Sigma$ coincides with the holomorphic submanifold $\Lambda_k$ and the metric $g$ is the Euclidean metric.
\item In the annulus $\bB_{100r_k} (p_k) \setminus \overline{\bB}_{r_k} (p_k)$ $\Sigma$ is the glued surface and $g$ is the metric $g_k$ described above.
\end{itemize}
The existence of the (local) glued surface and of the metric $g_k$ is thus the key point and is guaranteed by the glueing proposition below (after appropriate rescaling). In the rest of the note we use the following notation:
\begin{itemize}
\item $D_r (p) \subset \mathbb C$ is the disk centered at $p\in \mathbb C$ of radius $r$; $p$ will be omitted if it is the origin.
\item $\omega_0$ is the K\"ahler form on $\mathbb R^4 = \mathbb C^2$ and  $\delta$ is the Euclidean metric on $\mathbb R^4$.
\item $J_0$ is the standard complex structure on $\mathbb R^4$, namely $J_0 (a,b,c,d) = (-b,a,-d,c)$.
\item Norms on functions, tensors, etc. are computed with respect to the Euclidean metric.
\end{itemize}

\begin{prop}[Glueing]\label{l:glueing}
For every $\eta>0, N\in \mathbb N$ there is $\varepsilon >0$ with the following property. Assume that 
$f, h: D_{10} \setminus \overline{D}_1 \to \mathbb C$ are two holomorphic maps with 
\begin{equation*}
\|f\|_{C^{N+2}} +\|h\|_{C^{N+2} }\leq \varepsilon\, .
\end{equation*}
Then there are
\begin{itemize}
\item[(i)] a metric $g\in C^\infty$ with $\|g-\delta\|_{C^N} \leq \eta$ and $g=\delta$ outside $(D_9 \setminus \overline{D}_1)\times D_{2\eta}$,
\item[(ii)] an almost K\"ahler structure $J$ compatible with $g$ such that $\|J-J_0\|_{C^N} \leq \eta$ and $J=J_0$ outside $(D_9 \setminus \overline{D}_1)\times D_{2\eta}$,
\item[(iii)] an associated  almost K\"ahler form $\omega$ with  $\|\omega-\omega_0\|_{C^N} \leq \eta$ and $\omega=\omega_0$ outside $(D_9 \setminus \overline{D}_1)\times D_{2\eta}$
\item[(iii)] and a function $\zeta: D_{10} \setminus \overline{D}_1 \to D_\eta$ 
\end{itemize}
such that
\begin{itemize}
\item[(a)] $\zeta = (\re f, \im f)$ on $D_2\setminus \overline{D_1}$ and $\zeta = (\re h, \im h)$ on $D_{10}\setminus \overline{D_9}$;
\item[(b)] $\omega$ calibrates the graph of $\zeta$. 
\end{itemize}  
\end{prop}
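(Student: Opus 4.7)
First I define the interpolating map. Pick once and for all a monotone cut-off $\tau\in C^\infty(\R,[0,1])$ with $\tau\equiv 1$ on $(-\infty,3]$ and $\tau\equiv 0$ on $[8,\infty)$, and set
\[
\zeta(z):=\tau(|z|)\,f(z)+(1-\tau(|z|))\,h(z).
\]
From the hypothesis $\|f\|_{C^{N+2}}+\|h\|_{C^{N+2}}\leq \eps$ I obtain $\|\zeta\|_{C^{N+2}}\leq C_N\eps$, and if $\eps$ is small enough (depending on $\eta$ and $N$) the map $\zeta$ takes values in $D_\eta$. Moreover $\zeta$ coincides with $f$ on $D_2\setminus \overline{D}_1$ and with $h$ on $D_{10}\setminus \overline{D}_9$, hence is \emph{holomorphic} on both annuli; non-holomorphicity is confined to the middle region $D_8\setminus \overline{D}_3$.

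The second step is to build the almost K\"ahler structure via a localised pull-back. The global diffeomorphism
\[
\Psi\colon\R^4\to\R^4,\qquad \Psi(z,w):=(z,w-\zeta(z)),
\]
sends the graph $\Sigma_\zeta:=\{(z,\zeta(z))\}$ onto the plane $\{w=0\}$, and the pulled-back form $\omega^\Psi:=\Psi^*\omega_0$ calibrates $\Sigma_\zeta$ since $\omega_0$ calibrates $\{w=0\}$. To localise this modification inside $(D_9\setminus \overline{D}_1)\times D_{2\eta}$ while keeping closedness, I write $\omega^\Psi-\omega_0=d\gamma$ with the explicit primitive
\[
\gamma:=\Psi^*\alpha_0-\alpha_0=-\xi\,d\zeta_2-\zeta_1\,d\eta+\zeta_1\,d\zeta_2,
\]
where $\alpha_0:=x\,dy+\xi\,d\eta$ is a primitive of $\omega_0$ in the real coordinates $z=x+iy$, $w=\xi+i\eta$, $\zeta=\zeta_1+i\zeta_2$; the smallness $\|\gamma\|_{C^{N+1}}\leq C\eps$ is immediate. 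I then choose a product cut-off $\chi(z,w)=\chi_1(z)\chi_2(w)$ with $\chi_1\equiv 1$ on $D_{8.5}\setminus \overline{D}_{1.5}$ and $\chi_1\equiv 0$ outside $D_9\setminus \overline{D}_1$, and $\chi_2\equiv 1$ on $\overline{D}_{3\eta/2}$ and $\chi_2\equiv 0$ outside $D_{2\eta}$, and set
\[
\omega:=\omega_0+d(\chi\gamma).
\]
Then $d\omega=0$ automatically, $\omega=\omega_0$ outside $(D_9\setminus\overline{D}_1)\times D_{2\eta}$, and $\|\omega-\omega_0\|_{C^N}\leq C_\eta \eps\leq \eta$ for $\eps$ sufficiently small.

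Since $\omega$ is $C^N$-close to the symplectic $\omega_0$ it is nondegenerate, so the pointwise polar decomposition produces a unique almost complex structure $J$ compatible with the Euclidean orientation and with $\|J-J_0\|_{C^N}\leq C\|\omega-\omega_0\|_{C^N}$; the metric $g(V,W):=\omega(V,JW)$ is then positive-definite, and $(g,J,\omega)$ is an almost K\"ahler structure coinciding with $(\delta,J_0,\omega_0)$ wherever $\omega=\omega_0$. This yields items (i)--(iii).

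The main obstacle is to verify the calibration (b): that the tangent plane of $\Sigma_\zeta$ is a $J$-complex line (equivalently, that $\omega$ restricts to the volume form of $g$ along $\Sigma_\zeta$) at \emph{every} point. This is automatic on the portion of $\Sigma_\zeta$ contained in $\{\chi\equiv 1\}$, where $\omega=\omega^\Psi$ and $\Psi$ identifies the graph isometrically with the calibrated plane $\{w=0\}$, and also on the portion outside the support of $\chi$, where $\omega=\omega_0$ and $\zeta$ is holomorphic. The delicate case is the transition zone, namely the annuli $D_9\setminus \overline{D}_{8.5}$ and $D_{1.5}\setminus \overline{D}_1$, in which $\chi_1\in(0,1)$ but $\zeta=f$ or $\zeta=h$ is still holomorphic. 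Two simplifications assist the verification: since $|\zeta|<\eta<3\eta/2$ one has $\chi_2(\zeta(z))=1$ and $d\chi_2|_{\iota(z)}=0$, so along the parametrization $\iota(z)=(z,\zeta(z))$ the pull-back reduces to
\[
\iota^*\omega = dx\wedge dy + d\zeta_1\wedge d\zeta_2 - d(\chi_1\zeta_1)\wedge d\zeta_2;
\]
and the Cauchy--Riemann equations for the holomorphic $\zeta$ in the transition annuli produce the cancellations needed to match the extra term $-\zeta_1\,d\chi_1\wedge d\zeta_2$ with the analogous perturbation of the induced metric $\iota^*g$ caused by the modification of $\omega$. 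This pointwise identity---balancing the $z$-dependence of the cut-off $\chi_1$ against the holomorphic structure of $\zeta$ on the transition annuli---is the technical heart of the gluing and is the step that will require the most careful verification.
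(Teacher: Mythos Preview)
Your construction has a genuine gap, and you yourself flag it: the calibration on the transition annuli where $\chi_1\in(0,1)$ is not verified, only asserted. In fact, with the primitive $\gamma=\Psi^*\alpha_0-\alpha_0$ that you use, the verification \emph{cannot} be carried out. At a point $p=(z_0,\zeta(z_0))$ of $\Sigma_\zeta$ one has $\xi=\zeta_1$, hence $\gamma|_p=-\zeta_1\,d\eta$, which does \emph{not} vanish. Consequently $d\chi_1\wedge\gamma$ contributes a nonzero term along $\Sigma_\zeta$; for instance, at a point where $\zeta'(z_0)=0$ (so $T_p\Sigma_\zeta$ is the $z$-plane and $T_p\Sigma_\zeta^\perp$ the $w$-plane), one computes $(d\chi_1\wedge\gamma)(\partial_x,\partial_\eta)=-\zeta_1\,\partial_x\chi_1\neq 0$. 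Thus your $\omega$ has a nonzero tangent/normal component at $p$, the skew operator $A_p$ (and hence $J_p$) does \emph{not} preserve $T_p\Sigma_\zeta$, and calibration fails there. The hoped-for cancellation between the perturbation of $\omega$ and the perturbation of $g$ does not occur: calibration is equivalent to $J$-invariance of the tangent plane, and this is a structural condition on $\omega$ alone, not a balancing act between $\omega$ and $g$. Note also that you cannot fix this merely by choosing a better primitive of $\omega^\Psi-\omega_0$, because the pullback $\iota^*(\omega^\Psi-\omega_0)=-d\zeta_1\wedge d\zeta_2$ is nonzero on $\Sigma_\zeta$, so no primitive can vanish identically there.

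What the paper does differently is precisely to engineer the primitive so that it vanishes on the surface. They first pass to coordinates adapted to $\Sigma_\zeta$, so that $\Sigma=\{X_3=X_4=0\}$ and $T_p\Sigma^\perp=\Ker dX_1\cap\Ker dX_2$. Instead of $\omega^\Psi$ they interpolate $\omega_0$ with the form $\beta=a(X_1,X_2)\,dX_1\wedge dX_2+dX_3\wedge dX_4$, where $a$ is chosen so that $\beta$ and $\omega_0$ have the \emph{same} pullback to $\Sigma$. This allows them, via a tailored Poincar\'e lemma, to find a primitive $\vartheta$ of $\omega_0-\beta$ which vanishes on $\{X_3=X_4=0\}$. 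Then $\omega=\beta+d(\varphi\vartheta)$ satisfies $\omega|_\Sigma=(1-\varphi)\beta+\varphi\,\omega_0$ (the $d\varphi\wedge\vartheta$ term dies on $\Sigma$), and both $\beta$ and $\omega_0$ kill tangent/normal pairs in the region where $\zeta$ is holomorphic. This is the mechanism that forces $J_p(T_p\Sigma)=T_p\Sigma$ and hence calibration; it is the missing idea in your proposal.
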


\section{Proof of the glueing proposition} 

Before coming to the proof, let us recall some known facts from symplectic geometry. First of all, a $2$-form $\alpha$ on $\mathbb R^{2n}$ is called nondegenerate if for every point $p\in \mathbb R^{2n}$ the corresponding skew-symmetric bilinear map $\alpha_p : \mathbb R^{2n}\times \mathbb R^{2n} \to \mathbb R$ is nondegenerate, namely
\begin{equation}\label{e:nondegenerate}
\forall u \in \mathbb R^{2n}\neq 0\; \exists v\in \mathbb R^{2n} \mbox{ with } \alpha_p (u,v)\neq 0\, . 
\end{equation}
Given a skew-symmetric form we can define \(A_p: \R^{2n}\to \R^{2n}\) as 
\begin{equation}\label{e:defA}
\omega_p(v,w)=-\delta(v,A_pw)
\end{equation}
where we recall that \(\delta\) is the Euclidean metric. The nondegeneracy condition \eqref{e:nondegenerate} is  equivalent to $\ker A_p = \{0\}$. Note that if \(\omega_0\) is the standard K\"ahler form of \(\R^{2n}\), then \(A_p=J_0\) and that 
\[
\|\alpha-\omega_{0}\|_{C^N}\le \eta \Rightarrow \|A_p-J_0\|_{C^N}\le C\eta
\]
In particular, any $2$-form which is sufficiently close to $\omega$ in the $C^0$ norm is necessarily nondegenerate.

We start with  the following particular version of the Poincar\'e Lemma. Since we have not been able to find a precise reference, we give the explicit argument.

\begin{lemma}\label{l:Poincare}
Assume $U\subset \R^4$ is a star-shaped domain with respect to the origin and let $\beta$ be a closed $2$-form, with the property that the pull back of $\beta$ on $\{X_3=X_4=0\}$ vanishes. Then there is a primitive $\alpha$ with the properties that
\begin{itemize}
\item $\alpha$ vanishes identically on $\{X_3=X_4=0\}$;
\item $\|\alpha\|_{C^N} \leq C \|\beta\|_{C^{N+1}}$, where the constant $C$ depends only on the diameter of $U$.
\end{itemize}
\end{lemma}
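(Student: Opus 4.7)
The plan is to combine the standard Poincar\'e cone homotopy with an explicit algebraic correction that kills the residual part of the primitive on the plane $P := \{X_3 = X_4 = 0\}$.

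First I would produce a provisional primitive by the usual cone formula
\[
\alpha_0(x)(v) := \int_0^1 t\, \beta_{tx}(x, v)\, dt,
\]
which on a star-shaped domain is smooth, satisfies $d\alpha_0 = \beta$, and obeys $\|\alpha_0\|_{C^N(U)} \leq C \|\beta\|_{C^N(U)}$ with $C$ depending only on the diameter of $U$. Writing $\beta = \sum_{i<j} \beta_{ij}\, dX^i \wedge dX^j$, only $\beta_{12}$ enters the $dX^1$ and $dX^2$ components of $\alpha_0$ at points of $P$, and the hypothesis $\beta|_P = 0$ forces $\beta_{12}(X^1, X^2, 0, 0) \equiv 0$; hence those components of $\alpha_0$ already vanish on $P$. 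What remains is an expression of the form $\alpha_0|_P = a(X^1, X^2)\, dX^3 + b(X^1, X^2)\, dX^4$ for smooth functions $a, b$ read off from the integrals defining $\alpha_0$.

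The correction step is to set $\alpha := \alpha_0 - dG$ with $G(X) := X^3 a(X^1, X^2) + X^4 b(X^1, X^2)$. A direct computation gives $dG|_P = a\, dX^3 + b\, dX^4 = \alpha_0|_P$, so $\alpha$ vanishes identically on $P$ while still being a primitive of $\beta$. The estimate $\|\alpha\|_{C^N} \leq C\|\beta\|_{C^{N+1}}$ then follows from $\|\alpha_0\|_{C^N} \leq C\|\beta\|_{C^N}$ together with the observation that controlling $dG$ in $C^N$ costs one additional derivative of the coefficients $a, b$ of $\alpha_0$; this is exactly where the $C^{N+1}$ hypothesis on $\beta$ is spent.

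The main (mild) obstacle is a domain issue: for $G$ to be globally defined on $U$, the functions $a, b$ must make sense at $(X^1, X^2)$ whenever $(X^1, X^2, X^3, X^4) \in U$, i.e.\ $U$ must project nicely onto its slice with $P$. For a ball or polydisk centered at the origin this is automatic; for a general star-shaped $U$ one may extend $a, b$ smoothly via a retraction onto $\pi(U\cap P)$, or equivalently employ the modified homotopy $H_s(X) = (X^1, X^2, sX^3, sX^4)$ (when $U$ is invariant under it), whose associated homotopy operator already carries the factor $(0,0,X^3,X^4)$ and therefore produces a primitive that vanishes on $P$ with no correction needed.
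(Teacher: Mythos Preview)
Your approach is essentially the paper's: both use the cone-homotopy primitive together with an explicit correction of the shape ``$X_3\cdot(\text{something})+X_4\cdot(\text{something})$''. The only difference is the order. The paper writes $\beta=f\,dX_1\wedge dX_2+\lambda\wedge dX_3+\mu\wedge dX_4$, sets $\vartheta:=-X_3\lambda-X_4\mu$ (so $\vartheta$ is defined on all of $U$, since $\lambda,\mu$ are just the coefficients of $\beta$ in the full coordinates), and observes that $\bar\beta:=\beta-d\vartheta$ vanishes \emph{identically} on $P$; the cone primitive $\bar\alpha$ of $\bar\beta$ then vanishes on $P$ for free, and $\alpha=\bar\alpha+\vartheta$.

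This order sidesteps exactly the domain issue you flagged. Your version is fixed by the same trick: do not restrict $a,b$ to $P$, but take them to be the $dX^3$- and $dX^4$-coefficients of $\alpha_0$ as smooth functions on all of $U$, and set $G:=X_3\,a+X_4\,b$. On $P$ one still gets $dG=a\,dX_3+b\,dX_4=\alpha_0|_P$ because the $X_3\,da+X_4\,db$ terms drop out, and now $G$ is globally defined with $\|dG\|_{C^N}\le C\|\alpha_0\|_{C^{N+1}}\le C\|\beta\|_{C^{N+1}}$. With that one-line change your argument is complete for arbitrary star-shaped $U$, and the retraction/alternative-homotopy discussion is unnecessary.
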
 
\begin{proof}
First of all recall the standard formula for the primitive of a form given by integration along rays (cf. \cite[Theorem 4.1]{Spivak}). Namely, if
\[
\bar \beta = \sum_{i< j} \bar \beta_{ij}\, dX_{i} \wedge dX_{j}\, ,
\]
then a primitive $\bar \alpha$ can be computed using the formula
\begin{equation}\label{e:Spivak}
\bar \alpha (X) = \sum_{i} \sum_{j} (-1)^{j-1} \int_0^1 t \bar \beta_{ij} (tX)\, dt\, X_{j} dX_{i}
\end{equation}
with the  convention that $\bar \beta_{ij} = -\bar  \beta_{ji}$ if \(i>j\).
Using the latter expression we obviously have $\|\bar \alpha\|_{C^N} \leq C \|\bar \beta\|_{C^N}$. Moreover, if $\bar \beta$ vanishes identically on $\{X_3=X_4=0\}$ then clearly $\bar \alpha$ vanishes identically on $\{X_3=X_4=0\}$.

Given a general closed $2$-form $\beta$, we then look for a $1$-form $\vartheta$ which vanishes on $\{X_3=X_4 =0\}$ and with the property that $\bar \beta := \beta - d\vartheta$ vanishes on $\{X_3=X_4=0\}$. The resulting $\alpha$ will then be found as $\bar{\alpha} + \vartheta$, where $\bar\alpha$ is the primitive of $\bar \beta$ given in the formula \eqref{e:Spivak}. In order to find $\vartheta$ we first write $\beta$ in the form
\[
\beta = f\, dX_1\wedge dX_2 + \underbrace{(a_1  dX_1 + a_2\, dX_2)}_{=:\lambda} \wedge dX_3 + \underbrace{(b_1\, dX_1 + b_2\, dX_2 + b_3\, dX_3)}_{=:\mu} \wedge dX_4\, .
\]
By assumption $f$ equals $0$ on $\{X_3=X_4=0\}$. Let us set 
\[
\vartheta = -X_3 \mu -X_4 \lambda\, ,
\]
so that
\[
\beta -d\vartheta= f\, dX_1\wedge dX_2 + X_3 d\mu + X_4 d\lambda\, .
\]
Since $f$ vanishes on $\{X_3=X_4=0\}$ we then get the desired property that $\beta-d\vartheta$ vanishes on it as well.
\end{proof}

\begin{proof}[Proof of the Glueing Proposition] We will focus on the construction of the triple, whereas the estimates are a simple consequence of the algorithm.

\medskip
\noindent
\emph{Step 1: Definition of \(\zeta\) and a new  system of coordinates}: First we smoothly extend \(f\) inside \(D_1\) and we then define \(\zeta\) as 
\[
\zeta = (\re f, \im f) \varphi + (\re h, \im h) (1-\varphi)\, .
\]
where $\varphi \in C^\infty_c (D_5)$ with $0\leq \varphi \leq 1$ and $\varphi\equiv 1$ on $D_4$. In particular 
\[
\zeta=f \qquad\text{on \(D_4\)}\qquad\text{and}\qquad\zeta=h \qquad\text{outside \(D_5\)}.
\]
 We now choose  a system of coordinates \(X:=(X_1,\dots, X_4)\) such that  \(\|X-\Id\|_{C^{N+1}}\le C\eps\),
 \begin{equation}\label{e:system1}
 \Sigma=\mathrm{graph}(\zeta)=\{X_3=X_4=0\}\,
\end{equation}
and
 \begin{equation}\label{e:system2}
T_{p} \Sigma=\Ker dX_3\cap \Ker dX_4\qquad T_{p} \Sigma^\perp=\Ker dX_1\cap \Ker dX_2
\end{equation}
Note that this can be done by, for instance, taking normal coordinates around \(\Sigma\), provided \(\eps\) is chosen sufficiently small. 

More precisely, we first choose two vector fields \(\xi, \tau\) along \(\Sigma\) such that:
\begin{itemize}
\item \(|\xi_p|=|\tau_p|=1\) and \(\xi_p\perp \tau_p\) (in the euclidean metric);
\item \(T_{p}\Sigma^{\perp}=\mathrm{span} (\xi_p, \tau_p)\).
\end{itemize}   
We set
\[
Y(x_1,x_2, x_3, x_4)=(x_1,x_2, \zeta_1(x_1,x_2), \zeta_2(x_1,x_2))+x_3 \xi(x,_1,x_2)+x_4 \tau(x_1,x_2). 
\]
where \(\zeta=(\zeta_1, \zeta_2)\),
\[
p=(x_1,x_2, \zeta_1(x_1,x_2), \zeta_2(x_1,x_2))\in \Sigma, \qquad \xi(x,_1,x_2)=\xi_p \qquad {\color{blue}\tau}(x_1, x_2)={\color{blue}\tau}_p.
\]
In order to get vector fields $\zeta$ and $\tau$ whose derivatives are under control, a standard procedure is to take the standard vector fields $e_3 = (0,0,1,0)$, $e_4 = (0,0,0,1)$, project them orthogonally onto $T_p^\perp \Sigma$ and apply the Gram-Schmidt orthogonalization procedure to them. Simple computations give that
\[
\|\xi-e_3\|_{C^{N+1}} + \|\tau-e_4\|_{C^{N+1}} \leq C \|\zeta\|_{C^{N+2}}\, .
\]
Note in particular that, if \(\eps\) is chosen sufficiently small,  \(Y\) is a diffeomorphism onto its image and that the latter  contains \(D_{8}\times D_{8}\). Letting \(X=Y^{-1}\) it is immediate to check that \eqref{e:system1} is satisfied and thus also the first equality in  \eqref{e:system2}. To check the second one simply note, by the very definition of \(X\),
\[
\{X_1=c_1, X_2=c_2\}=p+T_{p} \Sigma^\perp \qquad X(p)=(c_1,c_2,0,0).
\]
 From now on, 
with a slight abuse of notation, we will denote by \(D_{r}\times D_{s}\) the product of disks in the \(X\) system of coordinates, that is
\[
D_{r}\times D_{s}=\{ X_1^2+X_2^2< r^2\,, \,X_3^2+X_4^2< s^2\}
\]
and we will work in the domain $D_8\times D_8$. 
Given that $\|DX - {\rm Id}\|_{C^0} + \|DY - {\rm Id}\|_{C^0} \leq \varepsilon$ and assuming, without loss of generality, that $X$ and $Y$ keep the origin fixed, such sets are comparable to the corresponding products $D^e_\rho \times D^e_\sigma$ in the euclidean system of coordinates, namely 
\[
D^e_{C^{-1} r} \times D^e_{C^{-1} s} \subset D_r\times D_s \subset D^e_{C r} \times D^e_{C s} 
\] 
where the constant $C$ approaches \(1\) as \(\eps\to 0\).

\medskip
\noindent
\emph{Step 2: Construction of the  \(2\) form}: We take  \(\sigma \ll \eta\) and, provided \(\eps\ll \sigma\),  we claim the existence of a  \(2\)-form \(\omega\) on \(D_{8}\times D_8\) such that 
\begin{enumerate}
\item[(a)] $\omega$ is closed (and hence exact);
\item[(b)] The pull back of \(\omega\) and    \(\omega_0\) are the same  on \(\Sigma\). 
\item[(c)]For all \(p\ \in \Sigma\cap\bigl( (D_{7}\setminus \overline{D_{3}})\times D_{8}\bigr)\) 
\begin{equation}\label{e:omegaspecial}
\omega_p=\omega_{p}(v,w)=0  \qquad \text{for all \(v\in T_{p} \Sigma\) and all \(w\in T_p\Sigma^\perp\) 
}
\end{equation} 
\item [(d)] \(\omega=\omega_0\) outside of $D_7\setminus \overline{D}_3\times D_{2\sigma}$
\item[(e)] \(\|\omega-\omega_0\|_{C^N}\le \eta\).
\end{enumerate}
To construct the form we observe that, on \(\Sigma\),  $i_\Sigma^\sharp \omega_0=a(X_1,X_2) dX_1\wedge dX_2$ for a suitable smooth function \(a\). Extending \(a\) constant in the \(X_3,X_4\) coordinates we can write 
\begin{equation*}\label{e:omega0onsigma}
\omega_0=a(X_1, X_2)dX_1\wedge d X_2+\overline {\omega}
\end{equation*}
where \(\bar \omega\) is pulled back to \(0\) on \(\Sigma\). Note that \(d \bar \omega=0\) since \(\omega\) is closed. Moreover 
\begin{equation}\label{e:estimates}
\|a-1\|_{C^{N+1} (D_8)}+\|\overline {\omega}-dX_3\wedge dX_4\|_{C^{N+1} (D_8\times D_8)}\le o_\eps(1)
\end{equation}
where \( o_\eps(1)\to 0\) as \(\eps\to 0\). We define
\begin{equation}\label{e:beta}
\beta=a(X_1,X_2) dX_1\wedge dX_2+dX_3\wedge dX_4.
\end{equation}
We now apply Lemma \ref{l:Poincare} to find a primitive \(\vartheta\) of \(\omega_{0}-\beta=\overline {\omega}-dX_3\wedge dX_4\) which equals  \(0\) on \(\Sigma\).  We also let \(\varphi\) be a smooth cut-off function such that 
\[
\begin{cases}
\varphi\equiv 0 &\text{on  \((D_{6}\setminus D_{4})\times D_{\sigma}\)}.
\\
\varphi\equiv 1\qquad  &\text{outside   $(D_7 \setminus \overline{D}_3)\times D_{2\sigma}$}
\end{cases}
\]
Note in particular that, provided \(\eps\ll \sigma\), 
\begin{equation}\label{e:inclusione}
\Sigma\cap \{\varphi \neq 0\} \subset \graph f\cup \graph h\, .
\end{equation}
We define 
\[
\omega=\beta +d(\varphi \vartheta)=\beta+\varphi (\omega_0-\beta)+d\varphi\wedge \vartheta.
\]
Clearly \(\omega\) satisfies (a)  and (d). Property  (e) follows by choosing \(\eps\ll \sigma\) as a  consequence of the construction of Lemma \ref{l:Poincare} and of  \eqref{e:estimates}. Moreover since \(\vartheta\) vanishes on \(\Sigma\) and the pull-backs of \(\beta\) and \(\omega_0\) on \(\Sigma\)  are  the same, also (b) is satisfied. To check (c)  we note that due to \eqref{e:system2} and the definition of $\beta$ we have that $\beta_p(v,w)=0$ for all $p$ in the domain of $X$ and $v \in T_p\Sigma, w \in T_p\Sigma^\perp$. In particular \eqref{e:omegaspecial} is satisfied on \(\{\varphi=0\}\).   Since $f,h$ are holomorphic outside \(D_2\), by \eqref{e:inclusione}, for $p\in \{\varphi \neq 0 \}\cap (D_8\setminus \overline{D_2})\times D_{8}$ the spaces \(T_{p} \Sigma\) and \(T_{p}\Sigma^\perp\) are perpendicular complex lines. Hence \(\omega_0\) satisfies \eqref{e:omegaspecial} there, since 
\[
\omega\bigr|_{\Sigma}=(1-\varphi)\beta+ \varphi \omega_0,
\] 
\(\omega\) satisfies \eqref{e:omegaspecial}  as well and (c) is verified.
 
\medskip
\noindent
\emph{Step 3: Definition of the almost complex structure and of the metric}:  To conclude the proof it will be enough to construct a metric \(g\) and a compatible almost complex structure \(J\). Here we follow a method used in~\cite{Bellettini}. Let \(A_p\) be the skew-symmetric matrix defined in \eqref{e:defA}. In particular \(Q_p=-A_p^2 =A_pA_p^{t}\) defines a positive definite quadratic form and thus it admits a (positive definite) square root. We set 
\[
g_{p}=\big(-A_p^2\big)^{-\frac{1}{2}}\qquad\text{and}\qquad J_{p}=g_{p}^{-1}A_p.
\]
Note that \(g_{p}\) and \(J_p\) equal, respectively, the euclidean metric \(\delta\) and the usual complex structure \(J_0\) where \(\omega=\omega_0\). Furthermore, since \(g_p\) commutes with \(A_p\), one immediately verifies that 
\[
J_{p}^{2}=-\Id\qquad g_{p}(J_{p} v, J_{p} w)=g_{p}( v,  w)\qquad \omega_{p}(v,w)=-g_{p}( v, J_{p} w)
\]
so that the triple \((g, J, \omega)\) defines an almost K\"ahler  structure on \(D_8\times D_8\) which coincides with the canonical one where \(\omega=\omega_0\). We are thus left to prove that \(\omega\) calibrates \(\Sigma\) on \((D_{10}\setminus \overline{D_{1}})\times D_{10}\). This is clear in the region where \(\omega=\omega_0\),  because in that region \(\Sigma\) equals either the graph  of \(f\) or that of \(h\) and these are holomorphic outside \(D_1\). Hence it is enough to verify that $\Sigma$ is calibrated in \(\bigl( (D_{7}\setminus \overline{D_{3}})\times D_{8}\bigr)\). To this end note that, if \(p\in \Sigma\cap\bigl( (D_{7}\setminus \overline{D_{3}})\times D_{8}\bigr)\),  by \eqref{e:omegaspecial} and the definition of \(A_p\), 
\[
0=\omega_{p}(v,w)=-\delta(v,A_pw)=\delta(A_pv, w) \qquad \mbox{ for all  \(v\in T_{p}\Sigma\) , \(w\in T_{p}\Sigma^\perp\).}
\]
In particular \(A_p\) maps \(T_p\Sigma\) into itself (and \(T_{p} \Sigma^\perp\) into itself as well). The same is true then for \(J_p\)  and thus, if \(g_p(v,v)=1\), \((v, J_pv)\) is a \(g_p\)-orthonormal frame of \(T_{p}\Sigma\). This implies that  \(\omega\) is pulled back on \(\Sigma\) to the \(g\)-volume form and concludes the proof. 
\end{proof}

\section{Branching singularities}

A simple modifications of the ideas outlined above proves the following

\begin{theorem}\label{t:branching}
For every $\varepsilon >0$ and every $N\in \mathbb N$ there is a smooth metric $g$ on $\R^4$, a smooth oriented curve $\Gamma$ in the unit ball $\bB_1$ passing through the origin and a smooth oriented surface $\Sigma$ in $\bB_1 \setminus \{0\}$ such that:
\begin{itemize}
\item[(a)] $g = \delta$ on $\R^4\setminus \bB_1$ and $\|g-\delta\|_{C^N} < \varepsilon$;
\item[(b)] $\a{\Sigma}$ is the unique area minimizing integral current in the Riemannian manifold $(\R^4, g)$ which bounds $\a{\Gamma}$;
\item[(c')] There is an finite number of branching singularities $p_k\in \Sigma\setminus \Gamma$ accumulating to the only boundary singular point $0$. 
\end{itemize}
\end{theorem}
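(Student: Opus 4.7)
Plan. The argument for Theorem~\ref{t:branching} follows the same skeleton as Theorem~\ref{t:main2} but replaces the transverse self-intersections (nodes) of the base map from Section~\ref{s:complex} by cuspidal branch points. Since holomorphic subvarieties of $\mathbb C^2$ are calibrated by the flat K\"ahler form, no perturbation of the metric is required: one may simply take $g=\delta$, so condition~(a) holds trivially with $\|g-\delta\|_{C^N}=0<\varepsilon$. The theorem then reduces to producing a holomorphic parametrization whose image is an integral current supported on a holomorphic subvariety with an infinite sequence of isolated cuspidal singularities accumulating at a boundary point.

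Base map. Fix a sequence $\{z_k\}\subset\mathbb K_1$ lying on a single ray interior to the wedge with $z_k\to 0$ and with $|z_k|$ decaying fast enough that the Weierstrass product below converges. In place of $G(z)=(z^3,g(z))$ define $\tilde G=(F,H):\mathbb K\to\mathbb C^2$ via
\[
F'(z)\,=\,e^{-z^{-\alpha}}\prod_{k\geq 1}\Bigl(1-\frac{z}{z_k}\Bigr),\qquad H'(z)\,=\,\lambda(z)\,F'(z),
\]
with $\lambda$ holomorphic and satisfying $\lambda'(z_k)\neq 0$ for every $k$ (e.g.\ $\lambda(z)=z$), and with $F,H$ the primitives vanishing at $0$. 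The essential singularity $e^{-z^{-\alpha}}$ plays exactly the role it does in Section~\ref{s:complex}: it ensures $C^\infty$-smoothness at $0$ on every wedge $\mathbb K_a$ while dominating any slow divergence of the infinite product. By construction $F'(z_k)=H'(z_k)=0$, so $\tilde G$ has critical points precisely at $\{z_k\}$. Taylor expansion at $z_k$ gives $\tilde G''(z_k)=F''(z_k)(1,\lambda(z_k))$ and shows, using $\lambda'(z_k)\neq 0$, that $\tilde G'''(z_k)$ is linearly independent of $\tilde G''(z_k)$; a standard argument in singularity theory then produces a local holomorphic change of coordinates centered at $p_k:=\tilde G(z_k)$ in which the image $\Sigma:=\tilde G(D)$ is the standard cusp $\{u^2=v^3\}$. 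Each $p_k$ is therefore a singular interior branch point of the current $T:=\tilde G_\sharp\a{D}=\a{\Sigma}$ in Chang's sense.

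Boundary curve, minimality, uniqueness. As in Section~\ref{s:complex} choose a smooth simple curve $\gamma\subset\mathbb K_1$ tangent to the imaginary axis at $0$, bounding a disk $D$ that contains all the $z_k$; after possibly translating and rescaling so that $\tilde G(0)=0$ and $\Sigma\subset\bB_1$, the curve $\Gamma:=\tilde G(\gamma)$ is a smooth embedded curve in $\mathbb R^4$ passing through the origin and $p_k\to 0\in\Gamma$. The current $T$ bounds $\a{\Gamma}$, is supported on a holomorphic subvariety, and has mass equal to $T(\omega_0)$ by Wirtinger's equality; Federer's calibration argument gives area-minimality in $(\mathbb R^4,\delta)$. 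Uniqueness is obtained verbatim from the discussion after Theorem~\ref{t:main2}: the boundary regularity result \cite[Theorem 1.6]{DDHM} supplies a regular point $q\in\Gamma\setminus\{0\}$ where any competing minimizer must have $J_0$-invariant tangents coinciding with those of $\Sigma$ (equality case of Wirtinger), and unique continuation for minimal surfaces together with the connectedness of the interior regular set \cite[Theorem 2.1]{DDHM} forces the competitor to equal $T$.

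Main obstacle. The delicate point is engineering $\tilde G$ to be globally injective on $D$. Near each $z_k$ the first coordinate $F$ is locally two-to-one, so one must check that $H$ separates the two preimages: this is guaranteed by the cubic term in the expansion of $H$, which carries opposite signs on the two local branches precisely when $\lambda'(z_k)\neq 0$, and it is this same nonvanishing that upgrades the branch point to a genuine (multiplicity-two) cusp rather than a higher degeneration. Global injectivity away from the critical sequence is the subtlest requirement: it is enforced by placing the $\{z_k\}$ along a single ray in $\mathbb K_1$ (away from the loci where $F$ could accidentally coincide), by tuning $\lambda$, and by exploiting the rapid decay of $e^{-z^{-\alpha}}$ at $0$, in exact analogy with the careful design of $g=\prod_k f_k$ in Section~\ref{s:complex}.
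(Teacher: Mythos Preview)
Your approach is genuinely different from the paper's and, if it worked with $g=\delta$, would settle a question the authors explicitly leave open (see the Introduction: ``it is essential that we are allowed to perturb the Euclidean metric \ldots the question whether such examples can exist in some Euclidean space remains open''). The paper does \emph{not} parametrize cusps directly; it first produces, via $G(z)=(z^7,g(z))$, an infinite family of \emph{quadruple} transverse intersections, replaces each by an irreducible curve $\{\prod_i(a_iz+b_iw)=\eta_k(z^3-w^2)\}$ carrying a genuine cusp, and then invokes the glueing Proposition to patch in an almost-K\"ahler structure calibrating the hybrid surface. The metric perturbation is the price paid for the non-holomorphic interpolation in the annuli.

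There is a real gap in your scheme, and it lies exactly where the two approaches diverge. You manufacture the cusps as \emph{critical points} of $\tilde G=(F,H)$; since these critical points $z_k$ must accumulate at $0$, the only way to get a holomorphic $F'$ on a wedge with zeros at every $z_k$ and a smooth extension to $0$ is to make $F'$ flat at $0$ (your factor $e^{-z^{-\alpha}}$), and then $H'=\lambda F'$ is flat as well. But then $\tilde G$ itself is flat at $0$, so the parametrization $t\mapsto \tilde G(\gamma(t))$ of $\Gamma$ has all derivatives vanishing at $t=0$, and there is no reason for the image to be a smooth embedded curve there---generically it will not be (e.g.\ two flat coordinates with different exponential rates produce a H\"older cusp in the image). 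By contrast, in the paper's construction the \emph{first} coordinate $z^n$ is not flat, and it is precisely this non-flat coordinate that carries the smoothness of $\Gamma$ at $0$; but a non-flat $F$ cannot have critical points accumulating at $0$, which is why the paper's singularities are created by the multivaluedness of $z\mapsto z^n$ rather than by critical points. A secondary (fixable) issue: with $z_k\to 0$ the product $\prod_k(1-z/z_k)$ diverges at every $z\neq 0$, so the formula for $F'$ needs to be rewritten (e.g.\ via a product in $1/z$). The global injectivity you flag as ``subtlest'' is indeed unverified, but the flatness obstruction above is the structural one.
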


The idea of the proof is to produce the analogous to Theorem \ref{t:main2} where the conclusion (c) therein is substituted by the conclusion (c') above. Here we sketch the necessary modifications to the arguments given for Theorem \ref{t:main2}. 

We start by constructing an example of an holomorphic subvariety inducing an area minimizing current $T$ as in Section \ref{s:complex} where the property (iv) is however replaced by
\begin{itemize}
\item[(v)] At each $p\in {\rm Sing}_i (T)$ there is a neighborhood $U$ such that $T$ in $U$ consists of {\em four} holomorphic curves intersecting transversally at $p$.
\end{itemize}
More precisely there are four distinct elements $[a_1, b_1], [a_2, b_2], [a_3, b_3], [a_4, b_4] \in \mathbb C \mathbb P^1$ such that the tangent cone to $T$ at $p$ is given by the union of four corresponding complex lines:
\[
\left\{(z,w) : \prod_{i=1}^4 (a_i z + b_i w) =0 \right\}\, .
\] 
In order to achieve such object we construct a similar function $g$ as in Section \ref{s:complex}, by defining
\[
f_k (z) = \exp (-z^\alpha) \sin \left({\rm Log}\, z + \frac{7 - 2k}{14} \pi i \right) \qquad \mbox{for $k = 0, 1 \ldots , 7$}
\]
and
\[
g (z) = \prod_{k=0}^7 f_k (z)\, .
\]
We then proceed as in Section \ref{s:complex} to define the zero sets $Z_k$ of $f_k$ on $\overline{\mathbb H}\setminus \{0\}$, the set $Z = \{0\} \cup \bigcup_k Z_k$, the curve $\gamma$ and the corresponding disk $D$, where we require the properties analogous to (A) and (B) therein. We finally define the map $G (z) := (z^7, g (z))$ and the current $T$ is thus given by $\a{G(D)}$.

Next, proceeding as in Section \ref{s:desingularization}, in a sufficiently small ball of radius $r_k$ centered at $p_k\in {\rm Sing}_i (T)$ we wish to replace $G (D)$ with another holomorphic subvariety, which has a branching singularity at $p_k$. Since $G (D)$ is, at small scale, very close to the cone
\[
C_k :=\bigcup_{i=1}^4 \underbrace{\left\{p_k + (z,w) : (a_i z + b_i w) =0 \right\}}_{=:\pi_{k,i}}\, ,
\]
the idea is to choose
\[
\Lambda_k := \left\{p_k + (z,w) : \prod_{i=1}^4 (a_i z + b_i w) = \eta_k (z^3-w^2)\right\}\, ,
\]
where $\eta_k$ is again a very small parameter. Choosing $r_k$ and $\eta_k$ sufficiently small, we can ensure that $G (D) \cap \bB_{100 r_k} \setminus \overline{\bB}_{r_k} (p_k)$ and $\Lambda_k \cap \bB_{100 r_k} \setminus \overline{\bB}_{r_k} (p_k)$ consist each of four annuli which are graphs over corresponding annular regions of the four distinct complex lines $\pi_{k, i}$, $i=1, \ldots, 4$. We can obviously engineer such graphs to be arbitrarily close to the corresponding planes, and hence to fall, after appropriating rescaling under the assumption of the glueing Proposition \ref{l:glueing}. Hence the construction of $\Sigma$ and of the almost K\"ahler structure $(g, J, \omega)$ follows the same arguments.


\end{document}